\renewcommand{\emph}[1]{\textbf{\textit{#1}}}
\newcommand{\bal}[1]{\begin{align*}#1\end{align*}}
\newcommand{\Z}{\mathbb{Z}} 
\newcommand{\E}{\mathbb{E}}
\newcommand{\Cov}{\mathrm{Cov}}
\newcommand{\bP}{\mathbb{P}}
\newcommand{\Var}{\mathbb{V}\mathrm{ar}}
\newcommand{\eps}{\varepsilon}
\renewcommand{\phi}{\varphi}
\renewcommand{\P}{\bP} 
\def\pto{\stackrel{p}{\longrightarrow}} 
\declaretheoremstyle[headfont=\normalfont\bfseries, 
postheadspace=\newline, bodyfont=\itshape, spaceabove=0.5cm,
spacebelow=0.5cm, notebraces=< >, shaded={bgcolor={blue!15!gray!5}}]{myLemmaStyle}
\declaretheoremstyle[headfont=\large\egothfamily, 
postheadspace=\newline, bodyfont=\itshape, spaceabove=0.5cm,
spacebelow=0.5cm, notebraces=| |, shaded={bgcolor={yellow!20}}]{myThmStyle}
\declaretheorem[heading=Theorem, parent = section, style= myThmStyle]{theo}
\declaretheorem[heading=Proposition, parent = section, style= myThmStyle]{propo}
\declaretheorem[heading=Lemma, sibling=theo, style= myLemmaStyle]{lemma}
\declaretheorem[heading=Corollary, sibling=theo, style= myLemmaStyle]{coro}
\theoremstyle{definition}
\newtheorem{defi}[theo]{Definition}
\begin{document}

\begin{center}
{\Large Limit theorems for statistics of non-crossing partitions}

\bigskip
Vladislav Kargin\footnote{{email:
vkargin@binghamton.edu; current address: 4400 Vestal Pkwy East, Department of Mathematics, Binghamton University, Binghamton, 13902-6000, USA}} 

\bigskip
Abstract: 
\end{center}

\begin{quote}
{\small
We study the distribution of several statistics of large non-crossing partitions. First, we prove the Gaussian limit theorem for the number of blocks of a given fixed size. In contrast to the properties of usual set partitions, we show that the number of blocks of different sizes are negatively correlated, even for large partitions. In addition, we show that the sizes of blocks in a given large non-crossing partition are distributed according to a geometric distribution and not Poisson, as in the case of usual set partitions. Next, we show that the size of the largest block concentrates at $\log_2 n$, and that after an appropriate rescaling, it can be described by the double exponential distribution. Finally, we show that the width of a large non-crossing partition converges to the Theta-distribution which arises in the theory of Brownian excursions.
}
\end{quote}


\section{Introduction}

\subsection{Definition of NC partitions and bijections to other combinatorial structures}
Consider a partition of the ordered set $[n] = \{1, 2, \ldots, n\}$ into subsets (\emph{blocks}) $b_1, \ldots b_s$. This partition has a crossing if we can find elements $a < b < c < d$ such that $a$ and $c$ are in one block and $b$ and $d$ are in a different block. The partitions without crossings are called \emph{non-crossing (``NC'')} partitions. Their study was initiated in \cite{kreweras72} by Kreweras who described many of their properties. 

Later, non-crossing partitions and no-crossing pairings (NC partitions with blocks of size 2) have found many applications to problems in random matrix theory, in free probability, representation theory,  in theories of meanders and of Temperley-Lieb algebras (see, for example, \cite{biane98}, \cite{nica_speicher06}, \cite{simion2000}).

The class of non-crossing partitions belong to a broad family of Catalan discrete structures in the sense that the number of NC partitions of the set $[n]$ is given by the Catalan number $C_n$ and that NC partitions are connected by interesting bijections to other structures in this large family (see Stanley's book \cite{stanley2015} for a very complete description). 

\begin{figure}[htbp]
\centering
              \includegraphics[width=0.7\textwidth]{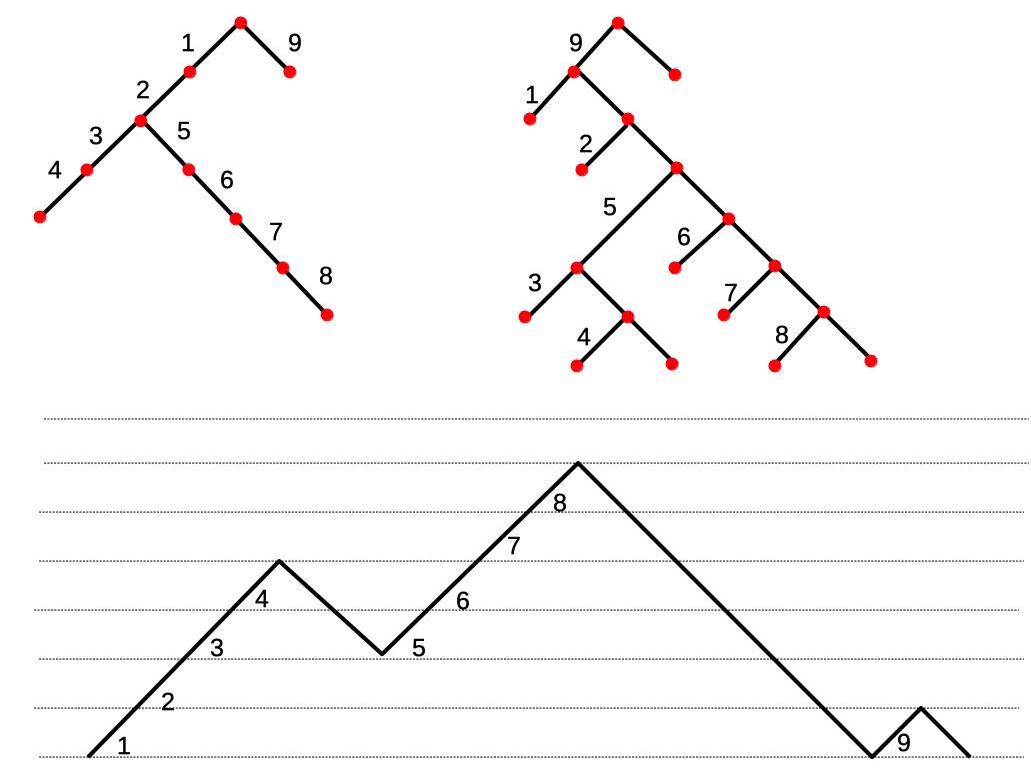}
              \caption{Discrete structures corresponding to a NC partition of $[9]$:
               $(1, 2, 5, 6, 7, 8), (3, 4), (9)$.}
              \label{figDyckPath}
\end{figure}


Figure \ref{figDyckPath} illustrate some of these bijections. 
For example, NC partitions of $[n]$ are in bijection with the Dyck paths with $2n$ steps. These are paths on the $\Z^2$ lattice which start at $(0, 0)$, and then can go either by one step up or one step down, but are not allowed to go below the horizontal axis, and finally end up at $(2n, 0)$. Obviously, each Dyck path has $n$ ``up'' steps and $n$ ``down'' steps. Let the ``up'' steps be labeled as $1, \ldots, n$ in order of their appearance. Then, blocks of an NC partition correspond to un-interrupted stretches of  ``down'' steps, and elements of each block can be read off as the closest preceding ``up'' steps at the same level as the ``down'' steps in the stretch. 

The Dyck paths with $2n$ steps are in well-known bijection with rooted ordered trees with $n + 1$ vertex. This bijection can be described by a depth-first walk that explores the tree. The ``up'' and ``down'' steps of the Dyck path correspond to the steps of the walk that, respectively, increase or decrease the distance from the root. The superposition of these two bijections gives a bijection between NC partitions and rooted planar trees, and if we label every edge by the label of the corresponding ``up'' step in the Dyck path, then blocks of an NC partition corresponds to leaves of the tree. More precisely, each block corresponds to the set of the tree edges which are visited on the way back from a given leaf to either the node where the walker goes to a new, un-explored branch, or to the final node of the walk, the root. The size of the block corresponds to the length of this return trip. 

Another bijection is between Dyck paths with 2n steps and ordered rooted binary trees on $2n + 1$ vertices. Its details are described in Appendix.  Here we only note that in this bijection, the partition blocks correspond to the leaves at the end of right-directed edges. For a given leaf, we can construct a path that goes back over the right edges only. Then the partition block is given by labels of the left edges from the vertices in this path and the block length is the length of this path. See Figure \ref{figDyckPath} for illustration. 

It is worthwhile to note that while the blocks of NC partitions can be interpreted in terms of trees, they correspond to properties of the trees which have not been much investigated, in contrast to such popular properties as height and profile of trees.\footnote{The height of a tree is the maximal distance from the root to a leaf and the profile describes how many vertices of each given degree is contained in the tree.} 

\subsection{Statistics of usual set partitions}
Before exploring the properties of NC partitions, it is useful to recall results about the usual set partitions where the non-crossing condition is not imposed. These results can then be used as benchmark. 

We rely here on the book \cite{sachkov97} by V. N. Sachkov.

Let $\xi_n$ is the number of blocks in a random partition of $[n]$. (When we call an object  random, we mean that it is selected from a uniform distribution on the complete set of these objects.) Then, for large $n$, 
\bal{
\E\xi_n &= \frac{n}{\log n} (1 + o(1)), 
\\
\Var (\xi_n) &= \frac{n}{(\log n)^2} (1 + o(1)), 
}
and the distribution of the normalized random variable 
\bal{
\eta_n = \frac{\xi_n - \E(\xi_n)}{\sqrt{\Var(\xi_n)}}
}
converges to the standard normal distribution as $n \to \infty$  (Theorem 4.1.1 in \cite{sachkov97}).

Now, let the random variables $\kappa_n(l)$, $l = 1, \ldots, n,$ denote the number of blocks that have size $l$ in a random partition. Then, the distribution of $\kappa_n(l)$ has the expectation and the variance both equal to 
$\lambda_n = \frac{(r_n)^l}{l!}$, where  $r_n$ is the solution of the equation $r e^r = n$. 


If $l$ is fixed and $n$ is growing then the variances of random variables $\kappa_n(l)$,  $\lambda_n$ are also growing. We can define the normalized random variables 
\bal{
\hat\kappa_n(l) = \frac{\kappa_n(l) - \E \kappa_n(l)}{\sqrt{\Var(\kappa_n(l))}}.
}
For a fixed $s$-tuple $l_1 < \ldots < l_s$, the joint distribution of normalized random variables
$\hat\kappa_n(l_i)$ converges to the standard multivariate normal distribution (Theorem 4.2.1 in \cite{sachkov97}).


V. N. Sachkov discusses the distribution of the size of the maximum block, and shows that it is concentrated within a neighborhood of the point 
\bal{
e r_n - \log \sqrt{2 \pi e r_n} - \log (e - 1), 
}
and that in this domain it is close to the double exponential distribution (without any additional normalization). (For a more precise statement, see Theorem 4.5.2 in \cite{sachkov97}.) Note that $r_n$ is asymptotically close to $\log n$, hence in the first approximation, the size of the largest block is $e \log(n)$. 

\subsection{Statistics of Catalan structures} 
In this section we very briefly describe what is already known about statistics of non-crossing partitions and related Catalan structures. 

While direct studies about statistics of NC partitions  are not numerous and are essentially limited to a study by Simion in \cite{simion94}, there are many studies about statistics of Dyck paths and rooted ordered trees. Some results can be translated into the language of non-crossing partitions by using the bijection we described above.  The problem with this approach is that these results are often not very natural in the setting of NC partitions. 

 Now, here is a small list of the results. The study by Simion in \cite{simion94} investigated statistics of NC partitions arising from restricted growth functions.
In \cite{denise_simion95}, Denise and Simion derived generating functions for two statistics on Dyck paths, which they called the pyramid weight and the number of exterior pairs. The generating functions for many other statistics of Dyck paths were derived by Deutsch in \cite{deutsch99}. Blanco and Petersen in \cite{blanco_petersen2014} researched the joint distribution of the area under a Dyck path and the rank of this path.


All these studies do not address the questions of asymptotic behavior of these statistics. In contrast, the asymptotic behavior was researched for statistics of rooted ordered trees, due to the importance of tree structures in the analysis of algorithms. In particular, for various families of trees, the researchers investigated their height, number of leaves, and the distribution of node degrees. 

In particular, in \cite{bkr72} and \cite{flajolet_odlyzko82}, it was shown that the tree height in many families has the expectation proportional to $c\sqrt{n}$, where $c$ is a constant specific to the family and $n$ is the size of the tree. The distribution of the height is a so-called theta distribution, named after theta distribution. It has connections to other areas of mathematics, see \cite{biane_pitman_yor01}.  

It appears that the statistics about Dyck paths do not correspond to such natural statistics of NC partitions as the total number of blocks and the number of blocks of a fixed size. However, the number of leaves of rooted ordered trees can be interpreted as the number of blocks in a NC partition, and we elaborate on the known results about the number of tree leaves by investigating its limit distribution and the distribution of the blocks with a fixed size. In addition, we will use a somewhat non-standard bijection to show that the height of trees has a relation to another characteristic of NC partitions which we call width. As a result we will establish that the width of a non-crossing partition has the theta-distribution.

\section{Limit theorems for random NC partitions}

\subsection{Number of parts}
Let us randomly select an NC partition $\pi$ from the uniform distribution on the set of all NC partitions of $[n]$ (denoted $NC(n)$). Then we can define several random variables associated with this random partition. Then we use $X_n$ to denote the number of blocks in $\pi$ and $Y_k^{(n)}$ to denote the number of blocks of length $k$ in $\pi$. Obviously, $X_n = Y_1^{(n)} + \ldots + Y_n^{(n)}$. It is natural to ask the question about the distribution of these random quantities for large $n$.

%
\begin{theo}
\label{theoBlocksCLT}
Let $X_n$ be the number of blocks in a random NC partition of $[n]$. Then \bal{
\E(X_n) &= \frac{n + 1}{2},
\\
\Var(X_n) &= \frac{n^2 - 1}{4(2n - 1)} \sim \frac{n}{8}.
}
 Let 
\bal{
Z_n = \frac{X_n - \E(X_n)}{\sqrt{\Var(X_n)}}.
}
Then, as $n \to \infty$, the cumulative distribution function of $Z_n$ converges to the standard Gaussian distribution function $\Phi(x)$. 
\end{theo}
Note that the expectation and variance are somewhat larger than corresponding quantities for usual partitions, which are $n/\log(n)$ and $n/\log^2(n)$, respectively.

\begin{proof} This result is easy because after a bijection it follows from analogous results for other Catalan structures. Namely, by the bijections above, the number of blocks in an NC partition corresponds to the number of leaves in a rooted planar tree and to the number of peaks in a Dyck path. The expectation and variance of these quantities are known. See, for example, section 6.1 in \cite{deutsch99}.

It is also known that the distribution of the number of leaves in rooted planar trees is asymptotically Gaussian. See Examples IX.24 and IX.25 on pp. 678 - 680 in \cite{flajolet_sedgewick2009}.
\end{proof}

\subsection{Distribution of the number of blocks of size $l$}

Consider a random NC partition of $[n]$. In the previous section, we have shown that on average, this partition has approximately $n/2$ blocks. How many of them have size $l \geq 1$?

First, let us define the relevant generating function. Let $N^{(l)}_{n, k}$ be the number of non-crossing partitions of $[n]$ that have $k$ blocks of size $l$. Then we define
\begin{equation}
\label{numberOfBlocksGF}
C^{(l)}(q, z) =\sum_{n\geq 0} \sum_{k = 0}^n N^{(l)}_{n, k} q^k z^n.
\end{equation}

\begin{figure}[htbp]
\centering
              \includegraphics[width=0.45\textwidth]{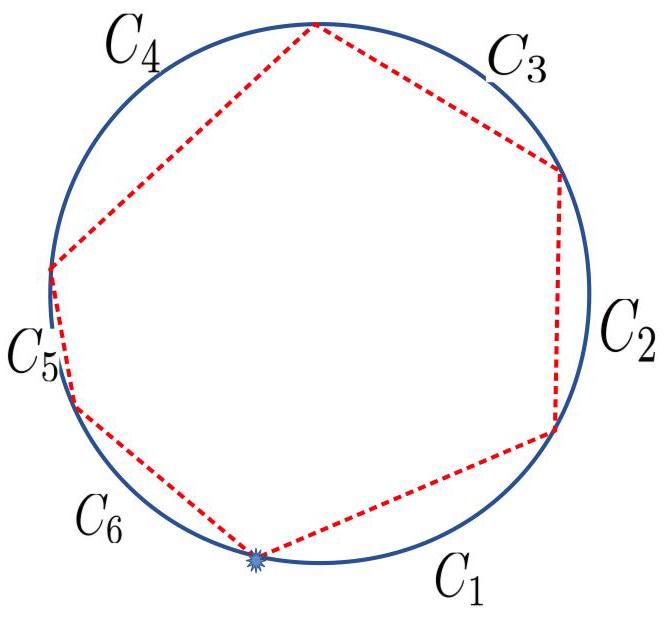}
              \caption{Construction of a non-crossing partition with a block of size 6}
              \label{figNCpartition}
\end{figure}

 \begin{theo}
 \label{theoGFforBlocks}
 The generating function $C^{(l)}(q, z)$ satisfies the following equation:
 \begin{equation}
 \label{blockGFEquation}
 C = \frac{1}{1 - zC} + (q - 1)(zC)^l. 
 \end{equation}
 \end{theo}
 \begin{proof}
 The equation is obtained by applying the symbolic transfer method by Flajolet and Sedgewick to the construction of an appropriate combinatorial class. The class $C$ here consists of all non-crossing partitions where the blocks of size $l$ are marked by marker $q$. The construction is given by the equation 
 \bal{
 C = &\epsilon + \mathrm{SET}_1(Z) \times \mathrm{SEQ}_1(C) + \ldots 
 + \mathrm{SET}_{l - 1}(Z) \times \mathrm{SEQ}_{l - 1}(C) 
 \\
 &+  q\mathrm{SET}_{l}(Z) \times \mathrm{SEQ}_{l}(C) 
 \\
 &+ \mathrm{SET}_{l + 1}(Z) \times \mathrm{SEQ}_{l + 1}(C) + \ldots 
 }
 Here $\epsilon$ denotes the empty partition, $Z$ is an atom (that is, an element of a partition), and $\mathrm{SET}_{k}(Z) \times \mathrm{SEQ }_{k}(C)$ corresponds to a block of size $k$ containing a marked element (``root''), together with a sequence of NC partitions which are nested between the  elements of this block. See Figure \ref{figNCpartition} for illustration. 
 
 Then by the symbolic method (Theorem I.1 and description of markers on p. 167 in \cite{flajolet_sedgewick2009}), this expression translates to the desired formula for the bivariate generating function:
 \bal{
 C^{(l)}(q, z) &= 1 + zC^{(l)}(q, z) + \ldots + \big(z C^{(l)}(q,z)\big)^{l - 1} +  q\big(z C^{(l)}(q,z)\big)^{l} 
 \\
 &+  \big(z C^{(l)}(q,z)\big)^{l + 1} + \ldots 
 = \frac{1}{1 - zC^{(l)}} + (q - 1)(zC^{(l)})^l. 
 }
 \end{proof}
 For $l = 1$, this equation can be solved explicitly, and we get an explicit formula for the generating function of the number of singletons.
 \begin{coro}
 The generating function for the number of singletons in non-crossing partitions is 
 \bal{
 C^{(1)}(q, z) = \frac{1 + (1 - q) z - \sqrt{1 - 2(1 + q) z + (-3 + 2q + q^2)z^2}}{2z\big(1 + (1 - q) z\big)}.
 }
 \end{coro}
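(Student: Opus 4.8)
The plan is to specialize the functional equation \eqref{blockGFEquation} from Theorem \ref{theoGFforBlocks} to $l = 1$ and solve the resulting quadratic for $C^{(1)}(q,z)$ in closed form. Writing $C$ for $C^{(1)}(q,z)$, setting $l=1$ turns \eqref{blockGFEquation} into $C = \frac{1}{1-zC} + (q-1)zC$. Since $1 - zC$ has constant term $1$, it is invertible in the ring of formal power series, so I can legitimately multiply through by $1 - zC$; after expanding and collecting powers of $C$, this should yield a quadratic equation of the shape $z\bigl(1 + (1-q)z\bigr)C^2 - \bigl(1 + (1-q)z\bigr)C + 1 = 0$.

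Next I would apply the quadratic formula to this equation, regarding it as a quadratic in $C$ with coefficients polynomial in $z$ (and $q$). The discriminant is $\bigl(1+(1-q)z\bigr)^2 - 4z\bigl(1+(1-q)z\bigr) = \bigl(1+(1-q)z\bigr)\bigl(1 - (3+q)z\bigr)$, and expanding the product gives $1 - 2(1+q)z + (-3+2q+q^2)z^2$, which is precisely the polynomial under the radical in the statement. Thus $C = \frac{\bigl(1+(1-q)z\bigr) \pm \sqrt{1 - 2(1+q)z + (-3+2q+q^2)z^2}}{2z\bigl(1+(1-q)z\bigr)}$.

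The only real point requiring care --- and the step I would treat most carefully --- is fixing the branch of the square root and verifying that the chosen root is an honest formal power series with the correct normalization. I would take $\sqrt{1 - 2(1+q)z + \cdots}$ to be the branch with constant term $+1$, the unique square root living in $\Q[q][[z]]$. With that convention the ``$+$'' root has a numerator with nonzero constant term while its denominator vanishes at $z=0$, so it cannot be a power series; the ``$-$'' root has numerator $(1-q)z + (1+q)z + O(z^2) = 2z + O(z^2)$, which is divisible by the factor $2z$ in the denominator, producing a series with $C^{(1)}(q,0) = 1$ as it must be (the empty-partition term). Hence the minus sign is the right one, giving exactly the formula claimed. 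As a sanity check I would expand to low order: the formula returns $C^{(1)}(q,z) = 1 + qz + (1+q^2)z^2 + \cdots$, matching the single singleton in the unique NC partition of $[1]$ and the two NC partitions of $[2]$ (two singletons, respectively none).
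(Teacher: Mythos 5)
Your proposal is correct and follows exactly the route the paper intends: the paper simply states that the $l=1$ case of equation \eqref{blockGFEquation} ``can be solved explicitly,'' and your derivation of the quadratic $z\bigl(1+(1-q)z\bigr)C^2 - \bigl(1+(1-q)z\bigr)C + 1 = 0$, the factored discriminant, and the branch selection via the constant term supply precisely the omitted computation. The low-order sanity check against the NC partitions of $[1]$ and $[2]$ is a nice confirmation.
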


%

\begin{theo}
\label{theoMomentsOfBlocks}
Let $X_n^{(l)}$ denote the number of blocks of length $l\geq 1$ in a random NC partition of $[n]$. Then, 
\bal{
\E X_n^{(l)} = \frac{n}{2^{l + 1}} \prod_{j = 0}^l \Big(1 + \frac{2 - j}{2n - j}\Big).
}
For the variance, we have the asymptotic expression
\bal{
\Var X_n^{(l)} = \frac{n}{2^{2l + 3}}\Big[ 2^{l + 2} - (l - 1)^2 - 2\Big] + O(1),
}
where the $O$-term is for $n \to \infty$ and the constant implied in this term may depend on $l$.

The covariance of $X_n^{(k)}$ and $X_n^{(l)}$, for $k \ne l$, is given by 
\bal{
\Cov(X_n^{(k)}, X_n^{(l)}) =  -\frac{n}{2^{k + l + 3}}\Big[ 2 + (k - 1)(l - 1)\Big] + O(1).
}
\end{theo}

Note that here we have two differences with the similar result for usual set partitions. First, for large $n$ the distribution of partition blocks over sizes is not Poisson with mean $r_n \sim \log n$, but rather geometric with the expected number of blocks of size $l$ approximately $n/2^{l + 1}$. Second, the covariance between number of blocks of two different sizes is not negligible even for large $n$. In particular, after rescaling we cannot expect that random variables $X_n^{(k)}$ will form a Gaussian process with elements independent for different $k$. 

\begin{proof}[Proof of Theorem \ref{theoMomentsOfBlocks}]
Let us define $G(q, z) = zC^{(l)}(q, z)$ and $t = q - 1$. This function satisfies the equation
\bal{
\frac{G}{z} = \frac{1}{1 - G} + G^l t,
}
which we can re-write as 
\bal{
z = \frac{G}{\phi(G)},
}
where
\bal{
\phi(u) =\frac{1}{1 - u} + u^l t,
}
and $t$ is a parameter.  This expression is suitable for the Lagrange inversion formula for the coefficients in the series $G(z) = \sum_{n = 1}^{\infty} g_n z^n$, which gives 
\bal{
g_n = \frac{1}{n} [u^{n - 1}] \phi(u)^n 
= \frac{1}{n} [u^{n - 1}] \Big[ \frac{1}{1 - u} + u^l t\Big]^n,
}
where $[u^n]f(u)$ is notation for the coefficient before $u^n$ in the power series expansion of $f(u)$ around $u = 0$.
 
Since $C^{(l)} = G/z$, for coefficients in the series
 $C^{(l)}(q, z) = \sum_{n = 0}^{\infty} C^{(l)}_{n}(q) z^n$, we get
 \bal{
 C^{(l)}_{n}(q) =  \frac{1}{n + 1} [u^{n}] \Big[ \frac{1}{1 - u} + u^l (q - 1)\Big]^{n + 1}.
 } 
Then, by basic properties of generating functions, the expectation of the random variable  $X_n^{(l)}$ is 
\bal{
\frac{1}{c_n} \frac{d}{dq} C^{(l)}_{n}(q)\Big|_{q = 1}  
&= \frac{1}{c_n} [u^{n}] \Big[ \frac{1}{1 - u} + u^l (q - 1)\Big]^{n} u^l\Big|_{q = 1} 
\\
&=
\frac{1}{c_n} [u^{n - l}] (1 - u)^{-n}, 
}
where $c_n = C^{(l)}_{n}(1)$ is the $n$-th Catalan number. Hence, this expectation is 
\bal{
\bigg[\frac{1}{n + 1}\binom{2n}{n}\bigg]^{-1} \binom{2n - l - 1}{n - 1}
 = n \frac{n + 1}{2n} \frac{n}{2n - 1} \ldots \frac{n - l + 1}{2n - l},
 }
 which is in agreement with the expression in the statement of this theorem. 
 
 For the variance, we first compute the second factorial moment as 
 \bal{
\frac{1}{c_n} \frac{d^2}{dq^2} C^{(l)}_n(q)\Big|_{q = 1}  
&=
\frac{1}{c_n} n [u^{n - 2l}] (1 - u)^{-(n - 1)}
\\
&=
\frac{n(n - 1)}{2} \frac{n + 1}{2n - 1} \frac{n}{2n - 2} \ldots \frac{n - 2l + 1}{2n - 2l - 1}, 
}
 Then, we can get the following expansions for the expectation and the 2nd factorial moment:
 \bal{
 \E X_n^{(l)} &= \frac{n}{2^{l + 1}} \Big[ 1 + \frac{(l + 1)(4 - l)}{4} n^{-1} + O(n^{-2})\Big],
 \\
 \E [ X_n^{(l)} (X_n^{(l)} - 1)] &=  \frac{n^2 - n}{4^{l + 1}} \Big[ 1 + \frac{(2 l + 1)(3 - l)}{2} n^{-1} + O(n^{-2})\Big],
 }
which implies that 
\bal{
\Var(X_n^{(l)}) = \frac{n}{2^{2l + 3}} \Big[ 2^{l + 2} - 3 + 2l - l^2 + O(n^{-1})\Big],
}
and gives the second statement of the theorem.

Finally, in order to calculate the covariance of $X_n^{(k)}$ and $X_n^{(l)}$, we define a trivariate generating function, 
\begin{equation}
\label{triviariateGF}
C^{(k, l)}(p, q, z) =\sum_{n\geq 0} \sum_{a = 0}^n \sum_{b = 0}^n N^{(k, l)}_{n, a, b} \, p^a q^b z^n,
\end{equation} 
where $N^{(k, l)}_{n, a, b}$ is the number of NC partitions of $n$ that have $a$ blocks of size $k$ and $b$ blocks of size $l$.  

Then, by an argument similar to the argument is Theorem \ref{theoGFforBlocks}, we find that 
$C^{(k, l)}(p, q, z)$ satisfies the equation 
 \begin{equation}
 \label{equTriviariateGF}
 C = \frac{1}{1 - zC} + (p - 1)(zC)^k + (q - 1)(zC)^l. 
 \end{equation}
 Then, the coefficients in the expansion 
 \bal{
 C^{(k, l)}(p, q, z) = \sum_{n = 0}^{\infty} C^{(k, l)}_{n}(p, q) z^n 
 }
 can be calculated as 
  \bal{
 C^{(k, l)}_{n}(p, q) =  \frac{1}{n + 1} [u^{n}] \Big[ \frac{1}{1 - u} + u^k (p - 1) + u^l (q - 1)\Big]^{n + 1},
 } 
 and 
  \bal{
\E(X_n^{(k)} X_n^{(l)}) &=\frac{1}{c_n} \frac{\partial}{\partial p} \frac{\partial}{\partial q} C^{(k, l)}_n(p, q)\Big|_{p = 1, q = 1}  
\\
&=
\frac{1}{c_n} n [u^{n - (k + l)}] (1 - u)^{-(n - 1)}
\\
&=
\frac{n(n - 1)}{2} \frac{n + 1}{2n - 1} \frac{n}{2n - 2} \ldots \frac{n - (k + l) + 1}{2n - (k + l) - 1}. 
}
After some calculations, this leads to 
\bal{
\Cov(X_n^{(k)}, X_n^{(l)}) &= \E(X_n^{(k)} X_n^{(l)})  - \E(X_n^{(k)}) \E(X_n^{(l)})
\\
&=  - \frac{n}{2^{k + l + 3}} \Big(2 + (k - 1)(l - 1)\Big) + O(1),
}
which completes the proof of the third statement of the theorem. 
\end{proof}

%

Now we come to the question about the asymptotic distribution of the number of blocks of a given size. 

\begin{theo}
\label{blockSizeL_CLT}
Let $X_n^{(l)}$ denote the number of blocks of length $l\geq 1$ in a random NC partition of $[n]$. Define
\bal{
Z_n ^{(l)}= \frac{X_n^{(l)}- \E(X_n^{(l)})}{\sqrt{\Var(X_n^{(l)})}}.
}
Then, for every $l \geq 1$ as $n \to \infty$, the cumulative distribution function of $Z_n^{(l)}$ converges to the standard Gaussian distribution function $\Phi(x)$. 
\end{theo}

Before prooving this theorem, we summarize some tools from the book by Flajolet and Sedgewick. They are collected here for the convenience of the reader. 

 We say that that a function of complex argument $y(z)$ is an \emph{analytic generating function (analytic GF)} if it is analytic at zero and if  its expansion, 
 \begin{equation}
 \label{seriesY}
 y(z) = \sum_{n = 0}^{\infty} y_n z^n, 
 \end{equation}
 have real non-negative coefficients $y_0 = 0$, $y_n \geq 0$.
 \begin{defi}
 \label{defiStableSingularity}
 The analytic GF $y(z)$ is said to \emph{have a stable dominant singularity}\footnote{Flajolet and Sedgewick say that $y(z)$ belongs to the \emph{smooth implicit-function schema}.} at $z = r > 0$, if there exists a bivariate function $G(z, w)$ such that 
 \begin{equation}
 y(z) = G(z, y(z)), 
 \end{equation}
 and $G(z, y(z))$ satisfies the following conditions:
\begin{enumerate}[(A)]
 \item $G(z, w) = \sum_{m, n \geq 0} g_{m, n} z^m w^n$ is analytic in a domain $|z| < R$ and $|w| < S$ for some $R, S > 0$.
 \item Coefficients $g_{m, n}$ are non-negative reals, $g_{0, 0} = 0$, $g_{0, 1}\ne 1$ and $g_{m, n} > 0$ for some $m$ and for some $n \geq 2$. 
 \item  The number $r < R$ and there exists $s$ such that $0 < s < S$ such that 
 \begin{align}
 G(r, s) &= s,
 \\
 G_w(r, s) &= 1,
 \end{align} 
\end{enumerate}
 \end{defi}
 We say that $G(z, w)$ is the \emph{characteristic function} of $y(z)$.
 
 The condition in (C) is aimed to ensure that $r$ is a singularity of $y(z)$ with $y(r) = s$. Then the conditions in (A) and (B), especially the non-negativity of the coefficients, ensure that this singularity is a quadratic singularity with the smallest absolute value among all singularities of $y(z)$ (which is why we call it ``the stable dominant singularity''). This statement is explicated in the following theorem. For the case of polynomial or entire $G(z,w)$ with non-negative coefficients the fact that $r$ is a dominant singularity can be found in the classic book by Hille (\cite{hille62}, Theorem 9.4.6 on p. 274 of volume I) without mention that the singularity is quadratic. In a more general form it was formulated first in \cite{bender74} with an error in the set of conditions (see counterexample in \cite{canfield84}) and proved in correct form in \cite{meir_moon89}.
 
 %
 
 \begin{theo}
 \label{theoStableSingularity}
 Let $y(z)$ be an analytic GF that has a stable singularity at $r$ with the characteristic function $G(z, w)$. Then the series in (\ref{seriesY}) converges at $z = r$ and 
 \begin{equation}
 y(z) = s - \gamma \sqrt{1 - z/r} + O(1 - z/r),
 \end{equation}
 in a neighborhood of $z = r$, where $s = y(r)$ and 
 \begin{equation}
 \gamma = \sqrt{\frac{2 r G_z(r, s)}{G_{ww}(r, s)}}.
 \end{equation}
 \end{theo}

This theorem allows to extract information about the coefficients in the expansion of $y(z)$. One additional condition is needed. An analytic generating function $y(z)$ is called \emph{aperiodic} if for some $i < j < k$, the coefficients $y_i, y_j, y_k$ are all non-zero and $\textrm{gcd}(j - i, k - i) = 1.$

\begin{coro}
If analytic GF $y(z)$ satisfies the conditions of the previous theorem and aperiodic then 
\begin{equation}
[z^n] y(z) = \frac{\gamma}{2\pi n^3} r^{-n} \Big(1 + O(n^{-1}\Big).
\end{equation}
\end{coro}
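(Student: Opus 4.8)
The plan is to push the local expansion of Theorem~\ref{theoStableSingularity} through the singularity-analysis (``transfer'') machinery of Flajolet and Sedgewick \cite{flajolet_sedgewick2009}. That theorem already gives the behaviour of $y(z)$ at its dominant singularity: near $z=r$,
\bal{
y(z) = s - \gamma\sqrt{1-z/r} + O(1-z/r), \qquad s = y(r).
}
Two ingredients remain. First, this purely local statement must be promoted to analyticity of $y$ in a \emph{Delta-domain} (a disc of radius slightly larger than $r$ with a narrow sector around the point $r$ removed); this is exactly where the aperiodicity hypothesis enters, to guarantee that $r$ is the \emph{only} singularity on the circle $|z|=r$. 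Second, the expansion must be transferred term by term to the coefficients $[z^n]y(z)$, after a small sharpening of the remainder.

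For the first ingredient: the coefficients $y_n$ being non-negative, $y$ has radius of convergence $r$ (Pringsheim, together with Theorem~\ref{theoStableSingularity}), and since $\sum y_n z^n$ converges at $z=r$ it converges absolutely on $|z|=r$, so $y$ is continuous on the closed disc. Because the characteristic function $G(z,w)$ is analytic for $|z|<R$, $|w|<S$ with $r<R$ and $s<S$ (conditions (A), (C) of Definition~\ref{defiStableSingularity}), the analytic implicit function theorem applied to $w=G(z,w)$ shows that $y$ continues analytically across any $z_0$ with $|z_0|=r$ at which $G_w(z_0,y(z_0))\ne 1$. If $|z_0|=r$ and $z_0\ne r$, then non-negativity of the coefficients of $y$ and of $G$ gives $|y(z_0)|\le y(r)=s$, and the aperiodicity of $y$ upgrades this to $|y(z_0)|<s$ by the Daffodil lemma of \cite{flajolet_sedgewick2009}; since condition (B) furnishes some $g_{m,n}>0$ with $n\ge 2$, the map $w\mapsto G_w(r,w)$ is strictly increasing near $s$, so
\bal{
|G_w(z_0,y(z_0))| \le G_w(r,|y(z_0)|) < G_w(r,s) = 1,
}
and hence $y$ is analytic at $z_0$. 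Thus $r$ is the unique singularity on $|z|=r$, and patching the local continuations (compactness of the circle) extends $y$ to a Delta-domain. Because $y$ solves $w=G(z,w)$ with $G$ analytic, $y$ has near $r$ a Puiseux expansion in powers of $(1-z/r)^{1/2}$; separating its integer-power (analytic) part gives $y(z) = \sigma(z) + O\!\big((1-z/r)^{3/2}\big)$ throughout the Delta-domain, where $\sigma(z) = s - \gamma\sqrt{1-z/r} + c\,(1-z/r)$ for a constant $c$, and $[z^n]\sigma(z) = -\gamma\,[z^n]\sqrt{1-z/r}$ for all $n\ge 2$.

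For the second ingredient: apply the transfer theorem of singularity analysis \cite{flajolet_sedgewick2009}. With $[z^n]\sqrt{1-z/r} = r^{-n}[z^n](1-z)^{1/2}$ and the classical expansion $[z^n](1-z)^{1/2} = \frac{n^{-3/2}}{\Gamma(-1/2)}\big(1+O(n^{-1})\big) = -\big(2\sqrt{\pi}\,n^{3/2}\big)^{-1}\big(1+O(n^{-1})\big)$, and with the $O\!\big((1-z/r)^{3/2}\big)$ remainder contributing $O(r^{-n}n^{-5/2})$ to $[z^n]y(z)$, we obtain
\bal{
[z^n] y(z) = \frac{\gamma}{2\sqrt{\pi}\, n^{3/2}}\, r^{-n}\big(1+O(n^{-1})\big) = \frac{\gamma}{2\sqrt{\pi n^{3}}}\, r^{-n}\big(1+O(n^{-1})\big),
}
as asserted. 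The part I expect to be the main obstacle is the first ingredient — specifically, proving that $r$ is the \emph{unique} dominant singularity: the quadratic-singularity expansion of Theorem~\ref{theoStableSingularity} is by itself inert for coefficient asymptotics, and excluding other singularities of modulus $r$ is precisely what requires the aperiodicity assumption, via the Daffodil lemma combined with the structure of the characteristic equation $y=G(z,y)$. The Delta-analyticity continuation and the term-by-term transfer afterward are routine.
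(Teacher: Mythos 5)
Your argument is correct and is essentially the proof the paper is implicitly invoking: the corollary is stated without proof as part of the Flajolet--Sedgewick smooth implicit-function schema (Theorem VII.3 of \cite{flajolet_sedgewick2009}), and you have reconstructed exactly that route --- aperiodicity plus the Daffodil lemma and the characteristic equation to isolate $r$ as the unique singularity on $|z|=r$, continuation to a Delta-domain, sharpening the remainder to $O\big((1-z/r)^{3/2}\big)$ via the Puiseux expansion (which is genuinely needed, since the $O(1-z/r)$ error of Theorem \ref{theoStableSingularity} alone would only give a relative error $O(n^{-1/2})$ after transfer), and then singularity analysis. One point to flag: your final answer $\frac{\gamma}{2\sqrt{\pi n^{3}}}\,r^{-n}\big(1+O(n^{-1})\big)$ does not literally match the displayed statement $\frac{\gamma}{2\pi n^{3}}r^{-n}$; the statement contains a typo (a missing square root, and an unbalanced parenthesis), and your version is the correct one --- it agrees with the formula $[z^n]y(z)=\gamma\big[\tfrac{1}{2\sqrt{\pi}n^{3/2}}+O(n^{-5/2})\big]z_0^{-n}$ that the paper itself uses later in the proof of the largest-block theorem.
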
 
 
Now, let us consider the bivariate generating function $H(z, q)$ and let us consider the probability distribution with the following probability generating function:

\begin{equation}
p_n(q) = \frac{[z^n] H(z, q)}{[z^n] H(z, 1)}.
\end{equation}

We are interested in sufficient conditions on the generating function that ensure that this probability distribution converges (after normalization) to the standard Gaussian law. 
These conditions are given by Proposition IX.17 in Flajolet-Sedgewick, which we repeat below. 

Recall that the variability operator $\mathbb{V}$ is defined as 
\bal{
\mathbb{V}[B(q)] = B''(1) + B'(1) - \big( B'(1)\big)^2,
}
provided that $B(1) = 1$.

Let $H(z, q)$ be a bivariate generating function, analytic at $(0, 0)$ and suppose that it solves the equation $y = \Phi(z, y; q)$, where $\Phi(z, y; q)$ is a polynomial of degree at least $2$ in $y$. Let us define the following conditions that we can impose on $H(z, q)$.

\begin{enumerate}[(I)]
\item The function $y(z) = H(z, 1)$ has a stable dominant singularity at $z = \rho$ with the characteristic function $G(z, w) = \Phi(z, w; 1)$. 
\item There is a function $z = \rho(q)$ (``singularity movement function'') that solves the equation obtained from polynomial equations  
 \begin{align}
 \Phi(z, y; q) &= y,
 \\
 \Phi_y(z, y; q) &= 1,
 \end{align} 
 by elimination of variable $y$. This function is analytic in a neighborhood of $q = 1$ and $\rho(1) = \rho$, where $\rho$ is as in condition I above.
 \item The function $\frac{\rho}{\rho(q)}$ satisfies the variability condition: 
 \begin{equation}
 \mathbb{V}\Big[\frac{\rho}{\rho(q)}\Big] > 0.
 \end{equation}
\end{enumerate}

\begin{propo}
\label{GFtoGauss}
Let $H(z, q)$ be a bivariate generating function, analytic at $(0, 0)$ and suppose that it solves the equation $y = \Phi(z, y; q)$, where $\Phi$ is a polynomial of degree at least $2$ in $y$. \\
Assume that Conditions I, II, III above are satisfied. 

Then the probability distribution with the probability generating function \begin{equation}
p_n(q) = \frac{[z^n] H(z, q)}{[z^n] H(z, 1)}.
\end{equation}
has an asymptotic Gaussian distribution. 
\end{propo}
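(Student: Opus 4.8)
The plan is to promote the square-root singularity analysis of Theorem~\ref{theoStableSingularity}, which is available only at the fixed value $q=1$, to an analytic family of singular expansions indexed by $q$ in a complex neighborhood of $1$. Once this is done, dividing the coefficient asymptotics of $[z^n]H(z,q)$ by those of $[z^n]H(z,1)$ will present $p_n(q)$ in the \emph{quasi-power} form $A(q)\,B(q)^n(1+o(1))$, uniformly near $q=1$; feeding this form into the quasi-powers machinery (equivalently, the cumulant/continuity argument) then yields the Gaussian limit, with Condition~III furnishing a nondegenerate limiting variance. I would carry this out in three steps: (i) a uniform-in-$q$ singular expansion, (ii) uniform coefficient transfer, and (iii) the central limit argument.

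First, fix $q$ in a small neighborhood of $1$ and regard $\Phi(z,w;q)$ as a characteristic function for $H(\cdot,q)$. At $q=1$ this is the stable-singularity schema of Condition~I, so $\Phi_w(\rho,s;1)=1$ and $\Phi_{ww}(\rho,s;1)\ne 0$ (the latter inherited from the positivity in condition (B) of Definition~\ref{defiStableSingularity}). Condition~II supplies an analytic branch $\rho(q)$ of the critical point obtained by eliminating $y$ from $\Phi(z,y;q)=y$ and $\Phi_y(z,y;q)=1$, together with the critical value $s(q)=H(\rho(q),q)$, both analytic at $q=1$ with $\rho(1)=\rho$ and $s(1)=s$. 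Since $\Phi_{ww}$ stays nonzero and $\Phi_z$ stays positive near $(\rho,s,1)$, the branch point persists and the quadratic-singularity analysis that proves Theorem~\ref{theoStableSingularity} applies with $q$ as an analytic parameter, giving
\[
H(z,q) = s(q) - \gamma(q)\sqrt{1 - z/\rho(q)} + O\!\left(1 - z/\rho(q)\right),
\qquad
\gamma(q) = \sqrt{\frac{2\,\rho(q)\,\Phi_z(\rho(q),s(q);q)}{\Phi_{ww}(\rho(q),s(q);q)}},
\]
where $\rho(q),s(q),\gamma(q)$ are analytic and $\gamma(1)=\gamma$. The essential point is that this expansion holds uniformly for $z$ in a fixed indented disk ($\Delta$-domain) anchored at $\rho(q)$ and for $q$ in a complex neighborhood of $1$.

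Next, I would apply the transfer theorem — the Corollary to Theorem~\ref{theoStableSingularity}, in its uniform version — to extract coefficients. Writing $\psi_n$ for the $q$-independent polynomial prefactor of that Corollary (a constant multiple of $n^{-3/2}$), this yields
\[
[z^n]H(z,q) = \psi_n\,\gamma(q)\,\rho(q)^{-n}\bigl(1 + O(n^{-1})\bigr),
\]
uniformly for $q$ near $1$. Dividing by the $q=1$ instance, the factor $\psi_n$ cancels and we obtain
\[
p_n(q) = \frac{[z^n]H(z,q)}{[z^n]H(z,1)} = \frac{\gamma(q)}{\gamma(1)}\left(\frac{\rho}{\rho(q)}\right)^{n}\bigl(1 + O(n^{-1})\bigr),
\]
uniformly near $q=1$. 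This is exactly the quasi-power form with $A(q)=\gamma(q)/\gamma(1)$, $B(q)=\rho/\rho(q)$, exponent $\beta_n=n$, and error scale $\kappa_n=n$, where $A,B$ are analytic at $q=1$ with $A(1)=B(1)=1$.

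Finally, writing $X_n$ for a random variable with probability generating function $p_n$, I would invoke the quasi-powers theorem (Hwang; Theorem IX.8 in \cite{flajolet_sedgewick2009}), or run the continuity argument directly. Setting $q=e^{s}$ and $g(s)=\log B(e^{s})$, one checks $g(0)=0$, $g'(0)=B'(1)$, and $g''(0)=B''(1)+B'(1)-B'(1)^2=\mathbb{V}[B(q)]$, so $\log p_n(e^{s})=n\,g(s)+\log A(e^{s})+o(1)$ has mean part $\mu_n=n\,B'(1)+O(1)$ and variance part $\sigma_n^2=n\,\mathbb{V}[B(q)]+O(1)$. Substituting $s=t/\sigma_n$ and using $\mathbb{V}[B(q)]=\mathbb{V}[\rho/\rho(q)]>0$ from Condition~III, the cumulant generating function of $(X_n-\mu_n)/\sigma_n$ converges to $t^2/2$, and the continuity theorem gives convergence to the standard normal. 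The main obstacle is the \emph{uniformity} in steps (i)--(ii): one must show the $\Delta$-domain and error terms can be chosen uniformly over a complex neighborhood of $q=1$, and in particular that for all such $q$ the singularity $\rho(q)$ remains strictly dominant (no competing singularity of $H(\cdot,q)$ enters the $\Delta$-domain) and stays a simple square-root branch point. This persistence of the dominant quadratic singularity under complex perturbation of $q$ — not the cumulant algebra — is the delicate part, and it is precisely where the analyticity of $\rho(q)$ from Condition~II and the nonvanishing of $\Phi_{ww}$ from Condition~I are used.
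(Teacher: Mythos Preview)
Your proposal is correct and follows exactly the standard route to this result: perturb the square-root singularity analytically in $q$, transfer uniformly to coefficients, recognize the quasi-power form, and invoke Hwang's quasi-powers theorem with Condition~III supplying the nondegenerate variance. Note, however, that the paper does not give its own proof of this proposition at all --- it is quoted verbatim as Proposition~IX.17 from Flajolet--Sedgewick \cite{flajolet_sedgewick2009} and used as a black box --- so what you have written is essentially a summary of the argument in that reference rather than a comparison target within the paper.
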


Now, we are able to proceed to the proof of our result about the asymptotic Gaussian distribution of the random variables $X_n^{(l)}$.

\begin{proof}[Proof of Theorem\ref{blockSizeL_CLT}]
In our case the bivariate generating function $C_l(z, q)$ is defined in equation
(\ref{numberOfBlocksGF}), and it satisfies the equation (\ref{blockGFEquation}). It is convenient to define $H(z, q) = zC_l(z, q)$, which satisfies the equation 
\bal{
H = H^2 + z + (H^l + H^{l + 1})(q - 1) z.
}
Thus, in terms of Proposition \ref{GFtoGauss}, we can use 
\bal{
\Phi(z, y; q) = y^2 + z + (y^l + y^{l + 1})(q - 1) z.
}
For $q = 1$ the solution has a stable dominant singularity at $\rho = 1/4$. (Condition I is satisfied.)

If $q \ne 1$, then the equation $\Phi_y(z, y; q) = 1$ leads to 
\begin{equation}
\label{y2z}
z = \frac{1 - 2 y}{(q - 1) [l + (l + 1) y]y^{l - 1}}.
\end{equation}
After substituting this expression into equation $\Phi(z, y; q) = y$ and simplifying, we are led to the following equation for $s = y(q)$ (the value of $H(z, q)$ at the branching point).
\bal{
-1 + 2s + (q - 1) s^l \Big[(l- 1) (1 - s^2) + 2s\Big] = 0.
}
We are interested in the expansion 
\bal{
s(q) = \frac{1}{2} + a(q - 1) + b(q - 1)^2 + O\Big((q - 1)^3\Big),
}
and we calculate
\bal{
a &= - \frac{1}{2^{l + 3}}(3 l + 1),
\\
b &= \frac{1}{2^{2l + 5}}(9 l^3 + 17 l + 6).
}
Then we get the expansion for $\rho(q)$ by using (\ref{y2z}),
\bal{
\rho(q) &= \frac{1}{4} - \frac{3}{2^{3 + l}} (q - 1) - \frac{l}{2^{7 + 2l}}( - 21 + 
77l - 27 l^2 + 27 l^3) (q - 1)^2 
\\
&+   O\Big((q - 1)^3\Big).
}
In other words, we found the function $\rho(q)$ which is analytic in the neighborhood of $q = 1$ and satisfies Condition II of the proposition. 
It is a routine calculation to check that Condition III is also satisfied. Hence, the normalized coefficients in $[z^n] H(z, q)$ have an asymptotic Gaussian limit. 
\end{proof}

%

\subsection{The size of the largest block}

First, we show that the largest block in a typical NC partition has size $\log_2 n$.

\begin{theo}
Let $L_n$ denote the size of the largest block in a random NC partition of $[n]$. 
Then, as $n \to \infty$, 
\bal{
\frac{L_n}{\log_2 n} \pto 1,
}
where convergence is in probability. 
\end{theo}
Note that $\log_2 n = (\log_2)^{-1} \log n \approx 1.443 \log n$, and therefore the largest block in a NC partition is on average shorter than in a usual partition where it is around $e \log n \approx 2.718 \log n$
\begin{proof}
\bal{
\Pr\Big[L_n \geq (1+ \eps) \log_2 n\Big]
&=\Pr\Big[\bigcup_{l\geq l(n)}^{\infty} \{X_n^{(l)} \geq 1\}\Big]
\\
&\leq \sum_{l\geq l(n)}^{\infty} \Pr\Big[\{X_n^{(l)} \geq 1\}\Big],
}
where $l(n) = (1 + \eps) \log_2 n$.
Note that by the Markov inequality, 
\bal{
\Pr[X_n^{(l)} \geq 1] \leq \E X_n^{(l)} = \frac{n}{2^{l + 1}} \prod_{j = 0}^l \Big(1 + \frac{2 - j}{2n - j}\Big) < \frac{n}{2^l}, 
}
for $n \geq 2$. Then, the inequality above becomes
\bal{
\Pr\Big[L_n \geq (1 + \eps) \log n\Big] < 2n^{- \eps}
\to 0 
}
as $n \to \infty$.

In the opposite direction, we can write, 
\bal{
\Pr\Big[L_n < (1 - \eps) \log_2 n\Big]
&=\Pr\Big[\bigcap_{l\geq l(n)}^{\infty} \{X_n^{(l)} = 0 \}\Big]
\leq \Pr\Big[X_n^{(l(n))} = 0\Big],
}
where $l(n) = \lceil (1 - \eps) \log_2 n\rceil$.
We can estimate the probability on the right hand side of the inequality by the Chebyshev inequality,
\bal{
\Pr\Big[X_n^{(l(n))} = 0\Big] \leq \Pr\Big[|X_n^{(l(n))} - \E X_n^{(l(n))}| \geq  \E X_n^{(l(n))} - 1 \Big] \leq \frac{\Var(X_n^{(l(n))})}{(\E X_n^{(l(n))} - 1)^2}. 
}
From the asymptotic formulas for the expectation and the variance we obtain, 
\bal{
\Var(X_n^{(l(n))}) &\sim \E(X_n^{(l(n))}) \sim n^{\eps}, 
}
which implies that 
\bal{
\Pr\Big[X_n^{(l(n))} = 0\Big] \leq O(n^{-\eps}) \to 0,
}
as $n \to \infty$. This concludes the proof. 
\end{proof}

The next step is to determine the distribution of the largest block size as it deviates from $\log_2 n$. As it turns out, for large $n$, the largest block size distribution depends on how $n$ places itself with respect to powers of $2$. We use notation $\lfloor t \rfloor$  for the largest integer $\leq t$, and $\{t\} = t - \lfloor t \rfloor$ for the fractional part of $t$.

\begin{theo}
Let $L_n$ denote the size of the largest block in a random NC partition of $[n]$. Let an integer $k = \lfloor \log_2 n \rfloor + x$, and define $\alpha(n) = 2^{\{\log_2 n\}}\in [1, 2)$. Then, as $n \to \infty$, 
\bal{
\P[L_n \leq k] =  \exp\Big( -\alpha(n) 2^{-(x + 1)}\Big) \Big( 1 + O\big(n^{-1}\log^2 n\big)\Big).
}
\end{theo}

\begin{proof}
First, we find the generating function for NC partitions with blocks whose length is $\leq k$. The symbolic formula for the class of these partitions is 
\bal{
C^{(k)} = \eps + SET_1(Z) \times SEQ_1(C^{(k)}) + \ldots + SET_k(Z) \times SEQ_k(C^{(k)}).
}
This leads to the following equation for the generating function:
\bal{
C^{(k)}(z) &= 1 + z C^{(k)}(z) + \ldots + \Big[zC^{(k)}(z)\Big]^k
\\
&=\frac{1 - \Big[zC^{(k)}(z)\Big]^{k + 1}}{1 - zC^{(k)}(z)},
}
or 
\bal{
\Big[zC^{(k)}(z)\Big]^{k + 1} - z C^{(k)}(z)^2 + C^{(k)}(z) - 1 = 0.
}
Let us use the notation 
\bal{
y(z) = zC^{(k)}(z).
}
Then the equation for $y$ is $y = P(z, y)$, where $P(z, y) = z + y^2 - z y^{k + 1}$. 

This leads us to the situation described in Definition \ref{defiStableSingularity} and Theorem \ref{theoStableSingularity}, where $P(z, y)$ is the characteristic function for $y(z)$. 
 Theorem \ref{theoStableSingularity} is useful for us because it will allow us to determine the expansion of the generating function near the dominant singularity, and this expansion and the transfer theorems of the symbolic method will give us  the asymptotic expression for the number of NC partitions of $[n]$ with all block sizes $\leq k$. 

The singularity $(z_0, y_0 = y(z_0)$  solves the characteristic system:
\bal{
y &= z + y^2 - z y^{k + 1},
\\
1 &= 2y - (k + 1)z y^k.
}
The first equation of the system gives 
\bal{
z = \frac{y(1 - y)}{1 - y^{k + 1}},
}
and after plugging this expression into the second equation we obtain:
\bal{
y = \frac{1}{2}\Big[1 + k y^{k + 1} - (k - 1) y^{k + 2}\Big].
}
The solution for this equation is 
\bal{
y_0 = \frac{1}{2} + \frac{k + 1}{2^{k + 3}} + O\Big(\frac{k}{2^{2k}}\Big).
}
And then, 
\bal{
z_0 = \frac{y_0(1 - y_0)}{1 - y_0^{k + 1}} = \frac{1}{4}\Big(1 + \frac{1}{2^{k + 1}}\Big) + O\Big(\frac{k^2}{2^{2k}}\Big),
}
In order to apply Theorem \ref{theoStableSingularity} we also compute
\bal{
\gamma = \sqrt{\frac{2 z_0 P_z(z_0, y_0)}{P_{yy}(z_0, y_0}} = \frac{1}{2} + O\Big(\frac{k^2}{2^{k}}\Big)
}
and conclude that 
\bal{
y(z) = y_0 - \gamma \sqrt{1 - z/z_0} + O(1 - z/z_0).
}
Using the power expansion for the square root and Theorem VI.4 in \cite{flajolet_sedgewick2009} to justify that the error term in the formula for $y(z)$ can be neglected, we find that 
\bal{
[z^n] y(z) = \gamma \Big[\frac{1}{2\sqrt{\pi}n^{3/2}} + O(n^{-5/2})\Big]\Big(\frac{1}{z_0}\Big)^n.
}
For Catalan numbers (total number of NC partitions of $[n]$) we have the asymptotic approximation
\bal{
C_n = 4^n\Big[\frac{1}{\sqrt{\pi n^{3/2}}} + O\Big(\frac{1}{n^{5/2}}\Big)\Big].
}
Hence we have the following estimate:
\bal{
\P\{L_n \leq k\} &= \frac{[z^n]C^{(k)}(z)}{C_n} = \frac{[z^{n + 1}]y(z)}{C_n} 
\\
&= 4^{-(n + 1)}z_0^{-(n+1)}\Big(1 + O(n^{-1}) + O(k^2/2^k)\Big).
\\
&= \Big[\Big(1 + \frac{1}{2^{k + 1}}\Big) + O\Big(\frac{k^2}{2^{2k}}\Big)\Big]^{-(n+1)}\Big(1 + O(n^{-1}) + O(k^2/2^k)\Big).
}
We have assumed that $k = \lfloor \log_2 n \rfloor + x$ and defined $\alpha(n) = 2^{\{\log_2 n\}}$,  hence $2^{k + 1} = 2^{\lfloor \log_2 n \rfloor + x + 1} =2^{x + 1} \frac{n}{\alpha(n)}$. Plugging this into the previous expression, we find that 
\bal{
\P\{L_n \leq k\} &=  \Big[1 + \alpha(n) 2^{-(x + 1)} \frac{1}{n} + O\Big(\frac{\log^2 n}{n^2}\Big)\Big]^{-(n+1)}\Big(1 + O\big(\frac{\log^2 n}{n}\big)\Big)
\\
&=\exp\Big( - \alpha(n) 2^{-(x + 1)}\Big) \Big( 1 + O\big(n^{-1}\log^2 n\big)\Big).
}
\end{proof}
%
%
%
%
%
%
%

\section{Width of non-crossing partitions}
Let us think about the elements of the set $[n] = \{1, \ldots, n\}$ as points on the line of real numbers.  If $x_1 < x_2 < \ldots  x_k$ are points in a block $b$, then we can represent the block by semicircles $S_{[x_1, x_2]}$, $S_{[x_2, x_3]}$, \ldots,  $S_{[x_{k - 1}, x_k]}$, where $S_{[a,b]}$ denotes a semicircle in the upper half-plane with the diameter $[a, b]$. If the block has only one element $x$, then it is represented by a small vertical interval of length 1/2 that sticks out into upper half-plane at abscissa $x$.

Note that if we draw semicircles and intervals for all blocks of an NC partition $\pi$, they will be non-intersecting (except trivially for the same block at the real line). We say that this system of semicircles and intervals represents the partition $\pi$. 

Then for every half-integer point $x + \frac{1}{2}$, we can calculate \emph{width of the partition $\pi$ at $x$} as the number of intersections of the vertical line with abscissa $x +\frac{1}{2}$ and the semicircles in the graphical representation of $\pi$. (This vertical line never intersects the vertical intervals of singleton blocks.) Let us denote it $w_x(\pi)$. Then, the \emph{width} of an NC partition $\pi$ of the set $[n]$ is the maximum of $w_x(\pi)$ over all possible $x$, 
\bal{
w(\pi) = \max\{w_x(\pi) | x = 1, \ldots, n - 1\}. 
}

For the asymptotic distribution of the width we have the following interesting theorem. 
\begin{theo}
\label{theoWidth}
Let $W_n$ be the width of a random NC partition of the set $[n]$, and let $0 < \alpha < x <\beta < \infty$. Then, uniformly in $x$, 
\bal{
\P(W_n \geq  x \frac{\sqrt{n}}{2} ) = \Theta(x) + O(n^{-1}),
}
where 
\bal{
\Theta(x) = \sum_{j = 1}^{\infty} e^{-j^2 x^2}(4 j^2 x^2 - 2). 
}
\end{theo}

In particular, this implies that the expected width is $\E W_n = \frac{1}{2} \sqrt{\pi n} - \frac{3}{4} + o(1)$. More generally, the $r$-th moment of the distribution is given by the expression: 
\bal{
\E W_n^r = r (r - 1) \Gamma (r/2) \zeta(r)\Big(\frac{\sqrt{n}}{2}\Big)^r,
}
where $\zeta(z)$ is the Riemann zeta-function. For these formulas, see Example V.8 and Proposition V.4 on pp. 326 - 329 in \cite{flajolet_sedgewick2009}.

In the case when non-crossing condition is not imposed, the width for pairings of $2n$ points on the line was analyzed.  In this case, it was found that width converges to $n/2$ (see \cite{loz85}). Thus, the typical width of pairings without non-crossing condition is significantly larger. In addition, in the case of general pairings there is some research on the width $w_x$ as a random process in $x$ (see Example V.10 on p. 333 in \cite{flajolet_sedgewick2009} and references within). This is open in the non-crossing case.  

\begin{figure}[htbp]
\centering
              \includegraphics[width=0.7\textwidth]{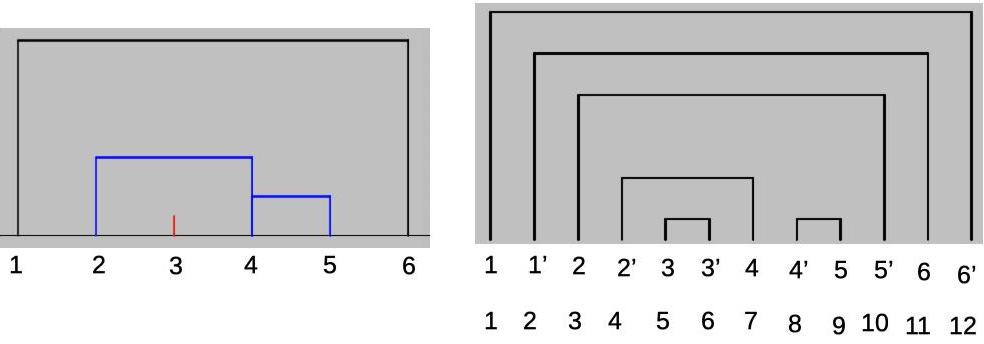}
              \caption{An example of doubling construction}
              \label{figDoublingConstruction}
\end{figure}

We start the proof of Theorem \ref{theoWidth} by noting that a pairing $P$ on $[2n]$ is a particular case of a partition and, therefore, its width $w(P)$ is well defined. In  addition, there is a bijection from NC partitions of $[n]$ to NC pairings of $[2n]$  by means of a so-called doubling construction. It is defined in \cite{nica2016} and it is equivalent to a bijection between Dyck paths  and NC partitions described in \cite{stump2013}.\footnote{This path-partition bijection is different from the bijection that we described in the beginning of this paper.}

An illustration of the doubling construction in Figure \ref{figDoublingConstruction} is hoped to be sufficient for understanding of how it works. In this construction, every point is doubled, so instead of a point $k$, we have two points $k$ and $k'$. If a block is not a singleton, then each line of a block in an NC partition $\pi$ corresponds to two lines in the corresponding NC pairing $P$. A vertical line interval of a singleton block $\{k\}$ in $\pi$  corresponds to a single line between $k$ and $k'$ in the NC pairing $P$. For formal definition, see formula (2.4) in \cite{nica2016}.

 We have the following lemma. 

\begin{lemma} 
If NC partition $\pi$ and NC pairing $P$ are related by the doubling construction then $w(\pi) = \lfloor w(P)/2 \rfloor$.
\end{lemma}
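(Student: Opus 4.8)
The plan is to compute the width of the pairing $P$ gap by gap and read off each gap-width directly in terms of the widths $w_x(\pi)$. Realize the doubled ground set as the ordered points $1<1'<2<2'<\cdots<n<n'$, so that $k\in[n]$ becomes the adjacent pair $k,k'$, and recall (this is what the doubling construction does; see formula (2.4) in \cite{nica2016}) that a block $b=\{j_1<\cdots<j_m\}$ of $\pi$ contributes to $P$ exactly the arc $(j_1,j_m')$ together with the arcs $(j_l',j_{l+1})$ for $1\le l\le m-1$; for a singleton $b=\{k\}$ this is the single arc $(k,k')$. Call $(j_1,j_m')$ the \emph{outer} arc and the rest the \emph{inner} arcs of $b$; every one of the $2n$ points is an endpoint of exactly one arc. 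The half-integer gaps of $P$ come in two types: the gap \emph{inside} a doubled element (between $k$ and $k'$) and the gap \emph{between} consecutive doubled elements (between $k'$ and $k+1$), and $w(P)$ is the largest width over all these gaps.

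First I would record the elementary description $w_i(\pi)=\#\{b:\min b\le i<\max b\}$ for $1\le i\le n-1$: the semicircles of one block tile $[\min b,\max b]$ into consecutive intervals, so the vertical line through $i+\frac{1}{2}$ meets exactly one of them when $\min b\le i<\max b$ and none otherwise, and we sum over blocks. I would also introduce the covering number $c_i=\#\{b:\min b\le i\le\max b\}$ and note that, because $i$ lies in a unique block, the $c_i-1$ covering blocks that do not contain $i$ each satisfy $\min b<i<\max b$ and hence straddle the gap at $i+\frac{1}{2}$; this gives $c_i\le w_i(\pi)+1\le w(\pi)+1$ for $1\le i\le n-1$, while $c_1=c_n=1$ trivially, so $c_i\le w(\pi)+1$ in every case.

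Next I would evaluate the two kinds of gap-widths of $P$ by inspecting endpoints. An arc crosses the gap between $k'$ and $k+1$ iff its left endpoint precedes $k+1$ and its right endpoint follows $k'$; for the block above this forces the outer arc $(j_1,j_m')$ to cross iff $\min b\le k<\max b$, and an inner arc $(j_l',j_{l+1})$ to cross iff $j_l\le k<k+1\le j_{l+1}$, i.e.\ iff the semicircle $S_{[j_l,j_{l+1}]}$ of $\pi$ meets the line through $k+\frac{1}{2}$. Summing over all blocks, the outer arcs contribute $w_k(\pi)$ (by the description above) and the inner arcs contribute $w_k(\pi)$ (by the very definition of $w_k(\pi)$), so this gap-width is $2w_k(\pi)$ for $1\le k\le n-1$. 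Similarly, an arc crosses the gap between $k$ and $k'$ iff its left endpoint is weakly before $k$ and its right endpoint weakly after $k'$; here the outer arc of $b$ crosses iff $\min b\le k\le\max b$, and an inner arc $(j_l',j_{l+1})$ crosses iff $j_l<k<j_{l+1}$ — strict on both sides, since $j_l'$ precedes $k$ exactly when $j_l<k$. Over all blocks the number of crossing outer arcs is $c_k$, while the crossing inner arcs are in bijection with the covering blocks of $k$ other than the one containing $k$, of which there are $c_k-1$; hence this gap-width equals $2c_k-1\le 2w(\pi)+1$.

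Finally I would combine these: the "between'' gaps already force $w(P)\ge\max_k 2w_k(\pi)=2w(\pi)$, whereas every "between'' gap-width is $\le 2w(\pi)$ and every "inside'' gap-width is $\le 2w(\pi)+1$, so $w(P)\le 2w(\pi)+1$; therefore $2w(\pi)\le w(P)\le 2w(\pi)+1$, which gives $\lfloor w(P)/2\rfloor=w(\pi)$. I expect the only real work to be the bookkeeping: keeping straight which arc of $P$ each feature of $\pi$ produces (outer arc $\leftrightarrow$ the block as a whole, inner arc $\leftrightarrow$ an internal semicircle), distinguishing strict from weak inequalities at the doubled points (this is exactly what makes the "inside'' gaps contribute the odd value $2c_k-1$), and the observation that no point of $[n]$ is covered by more than $w(\pi)+1$ blocks — this last fact is what guarantees the spurious $+1$ is absorbed by the floor.
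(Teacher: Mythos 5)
Your argument is correct and follows essentially the same route as the paper's: compare the two kinds of half-integer gaps of $P$, show the gaps between consecutive doubled elements have width exactly $2w_k(\pi)$ (forcing $w(P)\ge 2w(\pi)$), and show the gaps inside a doubled element can exceed this by at most $1$, so the floor absorbs the discrepancy. The only difference is cosmetic: where the paper handles the inside gaps by a parity-plus-monotonicity argument ($w_{2x-1}(P)$ is odd and drops by at most one when passing the point $2x$), you compute them exactly as $2c_k-1$ via the covering number and bound $c_k\le w(\pi)+1$ separately.
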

\begin{proof}
Let $\pi$ be a NC partition of $[n]$.  Consider $1 \leq x \leq n$ and let $w_x(\pi) = k$, that is, the vertical line with abscissa $x + \frac{1}{2}$ intersects $k$ partition lines $L$. Then, $w_{2x}(P) = 2w_{x}(P) = 2k$,  since the vertical line with abscissa $2x + \frac{1}{2}$ intersects exactly those $2k$ lines that correspond to the lines $L$ under the doubling construction. 

This implies, in particular, that $w(P) \geq 2 w(\pi)$. Suppose that the maximum of $w_y(P)$ is reached at some $y_0$. If $y_0 = 2x$ then $w(P) = w_{2x}(P) = 2 w_x(\pi) \leq 2\max_x w_x(\pi) = 2 w(\pi)$, hence $w(P) = 2 w(\pi)$. Alternatively, if $y_0 = 2x - 1$, then $w_{2x - 1}(P)$ must be odd by properties of pairings and $2 w_x(\pi) = w_{2x}(P) \geq w_{2x - 1}(P) - 1$ since  no more than one crossing with a vertical line can be eliminated when the line's abscissa changes from $2x - \frac{1}{2}$ to $2x + \frac{1}{2}$. Hence $w(\pi) \geq w_x(\pi) \geq \lfloor w_{2x - 1}(P)/2 \rfloor = \lfloor w(P)/2 \rfloor$, which together with the inequality above implies that  $\lfloor w(P)/2 \rfloor = w(\pi)$.






\end{proof}

\begin{proof}[Proof of Theorem \ref{theoWidth}]
By preceding arguments, it is enough to prove a corresponding result for the width of non-crossing pairings. 

For NC pairings, the number of intersections of the vertical line $L_{x + \frac{1}{2}}$ and the semicircles of a pairing $P$ equals the number of pairs $(a, b) \in P$ such that $a < x + 1/2 < b$, that is, the number of pairing arcs that have already started but have not yet been closed. Note that in the standard bijection of NC pairings and Dyck paths, the start of a pair correspond to a step up and the end of a pair corresponds to a step down. Hence, the width of a pairing at $x$ equals to the height of the corresponding Dyck path at $x$. 

Then the conclusion of the theorem follows from the known results about the height distribution of Dyck paths and rooted planar trees. (See \cite{fgor93} and Example V.8 on p. 326 - 330 in \cite{flajolet_sedgewick2009}).

\end{proof}

\begin{figure}[htbp]
\centering
              \includegraphics[width=0.7\textwidth]{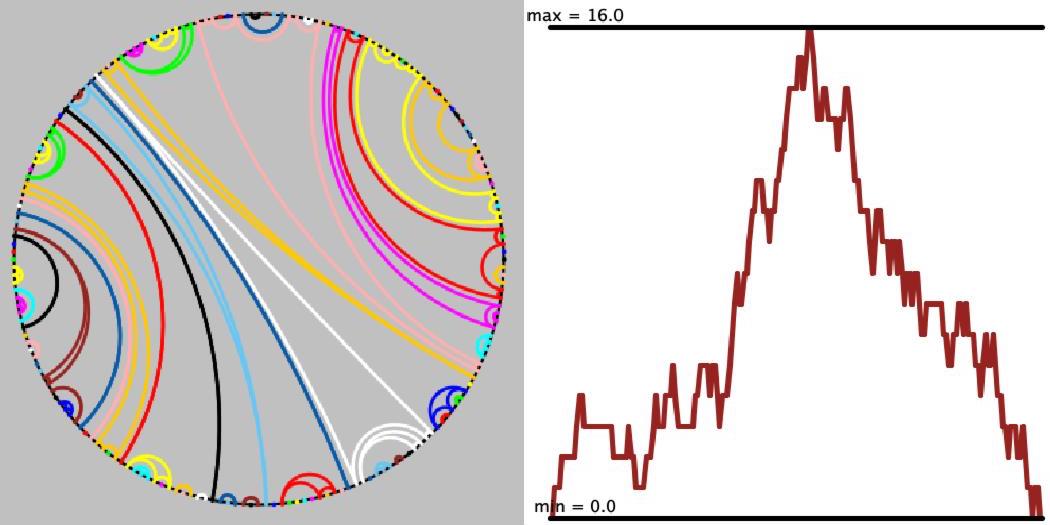}
              \caption{A random NC partition of $[200]$ and the corresponding width process.}
              \label{figWidthProcess}
\end{figure}

Remark: The results about the height of a random planar tree/Dyck path have been obtained in \cite{fgor93} by a difficult analysis of generating functions of trees that have limited height. An alternative approach to prove these results is to note that a random Dyck path converges uniformly almost surely to a Brownian excursion as $n \to \infty$, (see \cite{marchal2003}), and then use known results about Brownian excursions, as described in \cite{pitman_yor01}. 

Figure \ref{figWidthProcess} illustrates that the width of non-crossing partitions at $x$ converges as a process in $x$ to a Brownian excursion process.

%

\appendix

\section{A bijection between Dyck paths with $2n$ steps and binary planar trees with $2n + 1$ vertex}

%
%


The map from paths to the set of these trees is defined recursively. Consider the point of the first return of the path to $0$. There are two possibilities: either it happens at the last step, or it happens before the last step. 

In the first case, the path has the form $U S D$, where $S$ is a Dyck path with $2 (n - 1)$ steps.  It follows that there is a binary tree $G(S)$ corresponding to $S$. We create a new binary tree by defining a new root vertex with left and right edges and connecting the tree $G(S)$ to the right edge. The left edge is marked by the label of the first ``up'' step in the Dyck path $U S D$. 

In  the second case, the Dyck path can be written as $S_1 S_2$ where $S_1$ and $S_2$ are two Dyck paths of lengths $2k_1, 2k_2 > 0$ with $k_1 + k_2 = n$, and the path $S_2$ has the property that it never returns to zero except at the last step. By recursion we can build binary trees $G(S_1)$ and $G(S_2)$ on $2 k_1 + 1$ and $2 k_2 + 1$ vertices. The tree $G(S_2)$ has an additional property that it has a leaf immediately on the left of its root. We create a new tree on $2 n + 1$ vertices by gluing the root of  $G(S_1)$ to this left leaf of $G(S_2)$. 

\bibliographystyle{plain}
\bibliography{comtest}

\end{document}